\newcommand{\floor}[1]{\left\lfloor{#1}\right\rfloor}
\newtheorem{theorem}{Theorem}
\newtheorem{lemma}{Lemma}
\title{The maximum number of copies of an even cycle in a planar graph}
\author[1,3]{Zequn Lv}
\author[1]{Ervin Győri}
\author[1,3]{Zhen He}
\author[1]{Nika Salia}
\author[1]{Casey Tompkins}
\author[1,2]{Xiutao Zhu}
\date{}
\affil[1]{Alfr\'ed R\'enyi Institute of Mathematics.}
\affil[2]{Department of Mathematics, Nanjing University. }
\affil[3]{Department of Mathematical Sciences, Tsinghua University.}
\begin{document}

\maketitle

\begin{abstract}
We resolve a conjecture of Cox and Martin by determining asymptotically for every $k\ge 2$ the maximum number of copies of $C_{2k}$ in an $n$-vertex planar graph.
\end{abstract}

\section{Introduction}
A fundamental problem in extremal combinatorics is maximizing the number of occurrences of subgraphs of a certain type among all graphs from a given class.
In the case of $n$-vertex planar graphs, Hakimi and Schmeichel~\cite{hakimi} determined the maximum possible number of cycles length~$3$ and~$4$ exactly and showed that for any $k \ge 3$, the maximum number of $k$-cycles is $\Theta(n^{\floor{k/2}})$. 
Moreover, they proposed a conjecture for the maximum number of $5$-cycles in an $n$-vertex planar graph which was verified much later by Gy\H{o}ri \emph{et al.}~in~\cite{Gy}. 
The maximum number of $6$-cycles and $8$-cycles was settled asymptotically by Cox and Martin in~\cite{ccox}, and later the same authors~\cite{ccox2} also determined the maximum number of $10$-cycles and $12$-cycles asymptotically. 

Following the work of Hakimi and Schmeichel~\cite{hakimi}, Alon and Caro~\cite{alon} considered the general problem of maximizing copies of a given graph $H$ among $n$-vertex planar graphs. Wormald~\cite{Wormald} and later independently Eppstein~\cite{eppstein} showed that for $3$-connected $H$, the maximum number of copies of $H$ is $\Theta(n)$. The order of magnitude in the case when $H$ is a tree was determined in~\cite{generalized}, and the order of magnitude for an arbitrary graph was settled by Huynh, Joret and Wood~\cite{arb}. Note that by Kuratowski's theorem~\cite{Kuratowski} such problems can be thought of as generalized Tur\'an problems where we maximize the number of copies of the graph $H$ while forbidding all subdivisions of $K_5$ and $K_{3,3}$. 

Given that the order of magnitude of the maximum number of copies of any graph $H$ in an $n$-vertex planar graph is determined, it is natural to look for sharp asymptotic results.  While in recent times a number of results have been obtained about the asymptotic number of $H$-copies in several specific cases, less is known for general classes of graphs. Cox and Martin~\cite{ccox} introduced some general tools for studying such problems and conjectured that in the case of an even cycle $C_{2k}$ with $k\ge 3$, the maximum number of copies is asymptotically $n^k/k^k$.  We confirm their conjecture.

\begin{theorem}\label{mainconj}
For every $k\ge 3$, the maximum number of copies of $C_{2k}$ in an $n$-vertex planar graph is
\[\frac{n^k}{k^k} + o(n^k).
\]
\end{theorem}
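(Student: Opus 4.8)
The plan is to prove matching lower and upper bounds; I begin with the construction.

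\emph{The lower bound.} Take $k$ ``hub'' vertices $a_1,\dots,a_k$ together with $k$ pairwise disjoint ``clouds'' $W_1,\dots,W_k$, each $W_i$ of size $m=\lfloor (n-k)/k\rfloor$, and join $a_i$ to every vertex of $W_{i-1}\cup W_i$, indices modulo $k$. Drawing the hubs as a convex $k$-gon and placing the vertices of $W_i$ inside the region near the segment $a_ia_{i+1}$ exhibits this graph as a planar cyclic amalgamation of $k$ copies of $K_{2,m}$ glued along the hubs. I would then check that every copy of $C_{2k}$ must pass through all $k$ hubs, hence has exactly $k$ of its vertices in the clouds, using exactly one vertex of each $W_i$ as the common neighbor of $a_i$ and $a_{i+1}$; conversely each such choice produces a copy $a_1w_1a_2w_2\cdots a_kw_k$. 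Thus the number of copies of $C_{2k}$ equals $\prod_{i=1}^k|W_i|=(1-o(1))\,n^k/k^k$.

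\emph{The upper bound: reduction to the alternating pattern.} Let $G$ be an $n$-vertex planar graph and fix a threshold $D=D(n)\to\infty$ with $D=n^{o(1)}$. Call a vertex \emph{big} if it has degree more than $D$, and \emph{small} otherwise; since $\sum_v\deg v\le 6n$, there are only $o(n)$ big vertices. Each copy of $C_{2k}$ has a \emph{type}, namely its cyclic word of bigs and smalls, and there are $O(1)$ types, so it suffices to bound each class separately. I would argue that every type except the strictly alternating one contributes $o(n^k)$ copies: a run of two or more small vertices on the cycle is cheap to account for, since small--small edges total $O(n)$ and a small vertex continues a path in at most $D$ ways, while a run of two or more big vertices contributes a big--big edge, of which there are only $o(n)$ (the big vertices induce a planar graph with $o(n)$ edges); the delicate types---those with the ``right'' power of $n$ but not alternating---are controlled by combining these bounds with the $K_{3,3}$-freeness of planar graphs, which limits how many vertices can be adjacent to three prescribed big vertices. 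Since the only independent sets of size $k$ in $C_{2k}$ are its two color classes, the surviving copies are exactly those of the form $a_1w_1a_2w_2\cdots a_kw_k$ with every $a_i$ big and every $w_i$ small.

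\emph{The upper bound: counting the alternating copies.} Let $F$ be the bipartite subgraph of $G$ formed by all edges joining a big vertex to a small vertex. Then $F$ is planar and bipartite with color classes of sizes $o(n)$ and at most $n$, so $e(F)\le 2n+o(n)$, and every alternating copy above is a copy of $C_{2k}$ in $F$. Hence it is enough to bound
\[
\sum \ \prod_{i=1}^k\bigl|N_F(a_i)\cap N_F(a_{i+1})\bigr|,
\]
the sum being over cyclic sequences of distinct big vertices. For a \emph{fixed} such sequence, $K_{3,3}$-freeness forces all but a bounded number of vertices to be adjacent to at most two of the $a_i$, hence to be counted for at most one consecutive pair, so $\sum_{i=1}^k|N_F(a_i)\cap N_F(a_{i+1})|\le n+O(1)$; by AM--GM the product is then at most $(1+o(1))\,n^k/k^k$.

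The crux---and the step I expect to be the main obstacle---is to upgrade this per-sequence estimate to a bound on the full sum, i.e. to rule out a planar graph supporting many near-optimal ``fat $k$-cycles'' of clouds simultaneously. Here the budget $e(F)\le 2n+o(n)$ should be decisive: each cloud vertex consumes two edges of $F$, so a sequence whose product is close to the maximum already uses almost all of $F$, and then $K_{3,3}$-freeness pins down its $k$ super-edges as pairs of big vertices, hence the sequence itself up to reversal. I would make this quantitative through a stability analysis showing that any near-extremal $F$ essentially coincides with the cyclic amalgamation of the construction, at which point the count becomes the optimization of $\prod x_i$ subject to $\sum x_i\le n+o(n)$ and yields $(1+o(1))\,n^k/k^k$; the general tools of Cox and Martin should help organize this final step. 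Adding up the contributions of all types then gives the upper bound $n^k/k^k+o(n^k)$, matching the construction.
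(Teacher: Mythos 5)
Your lower bound is correct and is the same construction the paper uses, and your reduction to the strictly alternating big--small pattern is essentially a re-derivation of the Cox--Martin reduction lemma (Lemma~\ref{redlemma}), which the paper simply quotes. The genuine gap is exactly where you yourself locate ``the crux'': passing from the per-sequence estimate to a bound on the full sum $\sum_{(a_1,\dots,a_k)}\prod_{i}|N_F(a_i)\cap N_F(a_{i+1})|$. The AM--GM bound of $(1+o(1))n^k/k^k$ for one fixed cyclic sequence of big vertices says nothing about the sum: there can be up to $B^k$ sequences (with $B=o(n)$ big vertices), and nothing you write excludes, say, exponentially many (in $k$) sequences each contributing a constant fraction of $n^k/k^k$, which would overshoot the target. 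Your proposed repair---that a near-optimal sequence consumes almost all of the edge budget of $F$, followed by a stability analysis of near-extremal $F$---only speaks to near-optimal sequences and near-extremal graphs; to run a stability argument you would already need the extremal bound you are trying to prove, so as stated the plan is circular, and the contribution of the many non-extremal sequences is never controlled.

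This missing global optimization is precisely the quantity $\beta(k)=\sup_w\sum_{C_k\subseteq K_n}\prod_{e\in C_k}w(e)$ of Section~\ref{reduction} (take $w(a_ia_j)$ proportional to $|N_F(a_i)\cap N_F(a_j)|$, normalized so the weights sum to $1$), and the inequality $\beta(k)\le 1/k^k$ is Theorem~\ref{thm1}---the entire technical content of the paper, and the part of the Cox--Martin program that was open. The paper proves it not by stability but by a variational argument: for a weight function maximizing the cycle count, the weighted number $f(k,u,v)$ of $k$-vertex $u$--$v$ paths equals the same constant $\mu$ on every edge of positive weight (Lemma~\ref{lemma_main}), so the total equals $\mu/k$; the bound $\mu\le 1/k^{k-1}$ is then extracted by peeling off vertices of the path one at a time (Lemmas~\ref{thm2} and~\ref{thm3}) and a two-case AM--GM analysis according to how quickly the accumulated degrees exceed $t/k$. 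Nothing in your sketch substitutes for this step, so the upper bound is not established.
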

A construction containing this number of copies of $C_{2k}$ is obtained by taking a $C_{2k}$ and replacing every second vertex by an independent set of approximately $n/k$ vertices, each with the same neighborhood as the original vertex. Cox and Martin~\cite{ccox} proved that an upper bound of $\frac{n^k}{k!}+o(n^k)$ holds and introduced a general method for maximizing the number of copies of a given graph in a planar graph. We will discuss this method in Section~\ref{reduction} and present another conjecture of Cox and Martin which implies Theorem~\ref{mainconj}. In Section~\ref{mainproof}, we prove this stronger conjecture (Theorem~\ref{thm1}). We have learned that Asaf Cohen Antonir and Asaf Shapira have independently obtained a bound within a factor of $e$ of the optimal bound attained in Theorem~\ref{thm1}.  

\section{Reduction lemma of Cox and Martin}
\label{reduction}
 
 For a positive integer $n$ we will consider functions $w:E(K_n)\to\mathbb{R}$ satisfying the conditions: 
 \begin{enumerate}
 \item For all $e\in E(K_n)$, $w(e)\ge 0$, \label{1}
 \item $\sum_{e\in E(G)} w(e) = 1$. \label{2}
 \end{enumerate} 
 For a subgraph $H'$ of $K_n$ and a function $w$ satisfying Conditions~\ref{1} and~\ref{2}, let  
 \[
 p_w(H') := \prod\limits_{e \in E(H')} w(e).
 \]
 Also for a fixed graph $H$ and $w$ satisfying Conditions~\ref{1} and~\ref{2} let
 \[
 \beta(w,H) := \sum_{H \cong H'\subseteq K_n} p_w(H').
 \]
For simplicity of notation, we will often omit statements about isomorphism in the sums.
Cox and Martin proved several reduction lemmas for pairs of graphs $H$ and $K$, in which an optimization problem involving $\beta(w,K)$ implies a corresponding upper bound on the maximum number of copies of the graph~$H$ among $n$-vertex planar graphs. 
We state the reduction lemma which Cox and Martin proved for cycles. 
For an integer $k\ge 3$, let 
\[\beta(k) = \sup_w \beta(w,C_k),\] 
where $w$ is allowed to vary across all~$n$ and all weight functions satisfying Conditions~\ref{1} and~\ref{2}. 

\begin{lemma}[Cox and Martin~\cite{ccox}]\label{redlemma}
For all $k\ge 3$, the number of $2k$-cycles in a planar graph is at most
\[
\beta(k)n^k+o(n^k).
\]
\end{lemma}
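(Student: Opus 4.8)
The plan is to pass from an $n$-vertex planar graph $G$ with many copies of $C_{2k}$ to a weight function $w$ on a complete graph whose value $\beta(w,C_k)$ is, up to $o(1)$, as large as the normalized count $\#C_{2k}(G)/n^k$; the lemma then follows immediately from the definition of $\beta(k)$ as a supremum.

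First I would record the standard description of a copy of $C_{2k}$ as a \emph{thickened} $C_k$. Since $C_{2k}$ is bipartite with both classes of size $k$, fixing a copy amounts to choosing a sequence $u_1,\dots,u_k$ of distinct \emph{branch} vertices and, for each $i$, a \emph{subdivision} vertex $o_i\in N(u_i)\cap N(u_{i+1})$ (indices mod $k$), with all $2k$ vertices distinct. Every copy of $C_{2k}$ arises from exactly $4k$ such configurations (two choices of bipartition class, $k$ choices of starting branch vertex, two orientations), so $\#C_{2k}(G)$ equals $\tfrac1{4k}$ times the number of injective configurations, and hence
\[
\#C_{2k}(G)\le \frac1{4k}\sum_{(u_1,\dots,u_k)}\ \prod_{i=1}^{k}\bigl|N(u_i)\cap N(u_{i+1})\bigr|,
\]
the sum being over ordered $k$-tuples of distinct vertices of $G$.

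The heart of the matter is to turn this into a bound of the form $\beta(w,C_k)n^k+o(n^k)$. The naive choice of $w(\{x,y\})$ proportional to the codegree $|N(x)\cap N(y)|$ does not work: in a planar graph the normalizing sum $\sum_{x,y}|N(x)\cap N(y)|=\sum_v\binom{\deg v}{2}$ is typically of order $n^2$ — this already happens for $K_{2,n-2}$, and, more tellingly, for the extremal configuration for $C_{2k}$ itself — so the resulting $\beta(w,C_k)$ would fall short of $\#C_{2k}(G)/n^k$ by a factor growing with $k$. The problematic feature is the abundance of vertex pairs with small but positive codegree, and this is exactly where planarity must be used in an essential way: using that $G$ has at most $3n-6$ edges, at most $O(n/t)$ vertices of degree exceeding $t$, and no $K_{3,3}$-subgraph, one should clean $G$ — discarding only $o(n^k)$ copies of $C_{2k}$ — so that the surviving copies are controlled by a bounded-complexity skeleton: a small family of branch cycles whose bundles of common neighbours may be merged into super-vertices partitioning almost all of $V(G)$. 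Along such a skeleton the relevant weights sum to $n(1+o(1))$ rather than $\Theta(n^2)$, and the target to beat is precisely the extremal configuration, a $C_k$ with each edge thickened by a bundle of common neighbours.

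Given such a skeleton, I would take the complete graph on its super-vertices (allowing any number of vertices, which is legitimate since $\beta(k)$ is a supremum over all $n$) and assign to each skeleton edge a weight proportional to the size of the corresponding bundle. Then $\beta(w,C_k)$ is at least the product of the normalized bundle sizes, which is $\#C_{2k}(G)/n^k-o(1)$, and since $\beta(w,C_k)\le\beta(k)$ we obtain Lemma~\ref{redlemma}. I expect the cleaning-and-skeleton step to be the main obstacle: one must rule out a planar graph with $\Omega(n^k)$ copies of $C_{2k}$ that refuses to organize itself around a single thickened $C_k$, that is, upgrade the purely local accounting above to a global statement with the correct normalization — and this is the only place the $o(n^k)$ term is needed.
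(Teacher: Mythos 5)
The paper does not prove Lemma~\ref{redlemma} at all---it is imported verbatim from Cox and Martin~\cite{ccox}---so there is no internal proof to compare against; I can only judge your sketch on its own terms. Your opening reduction (each $C_{2k}$ as a cyclic sequence of branch vertices $u_1,\dots,u_k$ with subdivision vertices chosen from $N(u_i)\cap N(u_{i+1})$, counted with multiplicity $4k$) is correct, and your diagnosis of why the naive codegree weighting fails is also correct: even in the extremal example the sum $\sum_{x,y}|N(x)\cap N(y)|$ is $\Theta(n^2)$, dominated by pairs of low-degree vertices with codegree~$2$. But everything after that is a statement of intent rather than a proof, and you say so yourself (``I expect the cleaning-and-skeleton step to be the main obstacle''). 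That step is not an obstacle to be deferred; it \emph{is} the lemma.

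Concretely, two planarity inputs are missing and neither is supplied by your ``skeleton'' language. First, you must isolate a bounded set of \emph{hubs} (e.g.\ vertices of degree at least $\delta n$; by $e(G)\le 3n-6$ there are at most $6/\delta$ of them) and show that restricting the weight function to pairs of hubs loses only $o(n^k)$ configurations; this error accounting is delicate, since the crude bound $\prod_i|N(u_i)\cap N(u_{i+1})|\le\prod_i\deg u_i$ summed over tuples containing a non-hub still gives $\Theta(n^k)$, so a more careful argument is required. Second, and crucially, you must prove that $\sum_{x,y\ \mathrm{hubs}}|N(x)\cap N(y)|\le n+O_\delta(1)$, which is where $K_{3,3}$-freeness enters: any three hubs have at most two common neighbours, so all but $O_\delta(1)$ vertices lie in at most one bundle, i.e.\ the bundles are essentially disjoint. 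Without this the weight function cannot be normalized to total weight $1$ while keeping $\beta(w,C_k)\ge \#C_{2k}(G)/n^k-o(1)$. Finally, your framing in terms of ``a single thickened $C_k$'' is misleading: the reduction must handle arbitrary planar graphs whose $C_{2k}$'s are spread over many overlapping cycles of hubs, and the correct construction takes $w$ to be the normalized codegree function on \emph{all} hub pairs, with $\beta(w,C_k)$ summing over all $k$-cycles in that weighted graph---no global ``organization around one skeleton'' is available or needed.
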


Cox and Martin conjectured that $\beta(k) \le \frac{1}{k^k}$.  By Lemma~\ref{redlemma} such a bound immediately implies Theorem~\ref{mainconj}.  In Section~\ref{mainproof}, we prove that this bound indeed holds.
\begin{theorem}\label{thm1}
For all $k\ge 3$,
\[
\beta(k) \leq \frac{1}{k^k}.
\]
Equality is attained only for weight functions satisfying  $w(e) = \frac{1}{k}$ for $e \in E(C)$ and $w(e) = 0 $ otherwise, where $C$ is a fixed cycle of length $k$ of $K_n$.
\end{theorem}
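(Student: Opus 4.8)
The plan is to read Theorem~\ref{thm1} as a statement about a \emph{hypergraph Lagrangian}. A weight function $w$ satisfying the two conditions is a probability vector indexed by $E(K_n)$, and $\beta(w,C_k)$ is precisely $\sum_F \prod_{e\in F} w(e)$, where $F$ ranges over the edge sets of the $k$-cycles of $K_n$; these $k$-element sets are the hyperedges of the ``$k$-cycle hypergraph'' on the ground set $E(K_n)$. So $\beta(k)$ is the supremum, over all $n$, of the Lagrangian of this hypergraph. A single hyperedge $\{e_1,\dots,e_k\}$ has Lagrangian $\max\prod_i w(e_i)=1/k^k$, attained uniquely at $w(e_i)=1/k$, by the AM--GM inequality; thus the theorem says that no host graph beats a single $k$-cycle, and that a uniformly weighted $C_k$ is the only optimum. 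Note also that the weaker bound $1/k!$ of Cox and Martin comes from relaxing to arbitrary ordered $k$-tuples of edges (so $\beta(w,C_k)\le \frac1{k!}(\sum_e w(e))^k$); the improvement to $1/k^k$ must exploit that the $k$ edges of a $k$-cycle are far from independent.

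First I would reduce to a tame extremal configuration. On any fixed finite graph the objective is continuous on a compact simplex, so an optimal $w$ exists; let $G$ be the graph formed by its support. First-order (Lagrange) conditions give $\partial\beta/\partial w(e)=k\beta$ for every $e\in E(G)$ and $\le k\beta$ otherwise; summing $w(e)\,\partial\beta/\partial w(e)$ recovers $k\beta$, so this is consistent. In particular $\partial\beta/\partial w(e)>0$ for $e\in E(G)$, so every edge of $G$ lies on a $k$-cycle all of whose edges are in $G$; hence $G$ is bridgeless. Splitting $w$ across components and rescaling, the best component does at least as well (since $\sum_i t_i^{\,k}\le(\sum_i t_i)^k=1$), so we may assume $G$ connected. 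It therefore suffices to prove: for every connected bridgeless graph $G$ containing $C_k$, $\beta(w,C_k)\le 1/k^k$ for every admissible $w$ on $E(G)$, with equality only if $G=C_k$ and $w\equiv 1/k$; since this is uniform in $n$, it yields $\beta(k)\le 1/k^k$ outright.

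For $k=3$ this closes quickly. A second standard Lagrangian move: if two support edges $e,f$ lie in no common hyperedge, the objective is linear along $w\mapsto w+t(\mathbf{1}_e-\mathbf{1}_f)$, so weight can be pushed to the boundary without decreasing $\beta$, shrinking the support. For $k=3$ a common hyperedge of $e,f$ is a triangle through both, which exists iff $e\cap f\neq\emptyset$; so the support edges may be taken pairwise intersecting, which together with bridgelessness forces $G$ to be a single triangle, and then $\beta=w(e_1)w(e_2)w(e_3)\le 1/27$ with equality iff all three weights are $1/3$.

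The difficulty is $k\ge 4$, where the intersection move is vacuous: any two edges of $K_n$ lie on a common $k$-cycle. Here I would show directly that the support graph $G$ must be $C_k$ by ruling out a vertex $v$ of degree $d\ge 3$. Writing $a_i=w(vu_i)$ for the weights at $v$ and $R_{ij}$ for the weighted number of $(k-2)$-edge $u_i$--$u_j$ paths in $G-v$, one has $\beta=\sum_{i<j}a_ia_jR_{ij}+\beta_{G-v}$, a quadratic form in $(a_1,\dots,a_d)$ with zero diagonal; moreover the first-order conditions at $v$ force $\sum_{j\neq i}a_jR_{ij}$ to be the same constant $k\beta$ for every $i$ with $a_i>0$. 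The plan is to show that redistributing the fixed total $\sum_i a_i$ among the $d$ edges at $v$ — or, when that does not suffice, along a suitably chosen ear of $G$ — strictly increases $\beta$ unless $d\le 2$, contradicting optimality; an alternative is an induction on $|E(G)|$ via deletion or contraction of a carefully chosen edge, with the first-order identities above controlling the error terms. This structural step — eliminating every $2$-edge-connected host other than $C_k$ (theta graphs, chorded cycles, and so on) — is where I expect the real obstacle to be. Once it is settled, $G=C_k$ gives $\beta=\prod_{i=1}^k w(e_i)\le (1/k)^k$ by AM--GM, with equality iff every $w(e_i)=1/k$; combined with Lemma~\ref{redlemma} this proves Theorem~\ref{mainconj}.
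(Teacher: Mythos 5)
Your framing of $\beta(k)$ as a hypergraph Lagrangian is accurate, and the preliminary reductions are sound: existence of a maximizer, the stationarity condition $\partial\beta/\partial w(e)=k\beta$ on the support (which is exactly the paper's Lemma~\ref{lemma_main}, proved there by the same two-edge weight-shifting argument), the reduction to a connected bridgeless support, and the $k=3$ case via the standard ``two support edges in no common hyperedge'' compression. But for $k\ge 4$ --- which is essentially the entire content of the theorem, since small cases were already known --- you have not given a proof. You correctly note that the compression move is vacuous there, and your substitute, eliminating every vertex of degree $d\ge 3$ in the support by redistributing weight at that vertex or along an ear, is stated only as a plan; you say yourself that ``this structural step \ldots is where I expect the real obstacle to be.'' That obstacle \emph{is} the theorem. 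Ruling out theta graphs, chorded cycles, two cycles meeting at a vertex, etc., by local perturbations is not routine: the quadratic form $\sum_{i<j}a_ia_jR_{ij}$ at a high-degree vertex need not be definite, the first-order identities do not pin down the individual $R_{ij}$, and it is not clear that any single-vertex or single-ear redistribution yields a strict improvement. So the proposal, as written, proves the statement only for $k=3$.

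It is worth contrasting this with what the paper actually does, because it sidesteps the structural claim entirely. Rather than first proving that an optimal support is a $k$-cycle and then optimizing, the paper uses the stationarity condition to write $\beta=\mu/k$, where $\mu=f(k,u,v)$ is the common weighted count of $(k-2)$-vertex $u$--$v$ paths over support edges $uv$, and then proves the purely analytic bound $\mu\le 1/k^{k-1}$ for \emph{any} admissible $w$ satisfying the stationarity condition. This is done by a greedy peeling lemma (Lemma~\ref{thm2}), which bounds $f_{G_1}(k,v_1,v_0)$ by a product of weighted degrees of greedily chosen vertices times a residual path count, combined with Lemma~\ref{thm3}, which bounds residual counts by $\bigl(\sum_e w(e)/(r-1)\bigr)^{r-1}$ via AM--GM; a two-case analysis on whether the accumulated weighted degrees exceed $(k-2)/k$ closes the argument, and the equality characterization falls out of the first case. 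If you wish to complete your route instead, the missing step is a full elimination of all $2$-edge-connected supports other than $C_k$, and that requires a genuinely new argument, not a refinement of the moves you have described.
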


\section{Proof of Theorem~\ref{thm1}}\label{mainproof}

\newtheorem{definition}{Definition}[section]
\begin{proof}
Let us fix an integer $n$, a complete graph $K_n$ and a function $w$ satisfying Conditions~\ref{1} and~\ref{2}. Let us assume $w$ maximizes $\sum_{C_k \subseteq K_n} p_w(C_k)$.
Let~$P_j$ be a path with $j$ vertices. A $(j+2)$-vertex path with terminal vertices $u$ and $v$ is denoted by $vP_ju$. For vertices $u$ and $v$, a subgraph $H$ of $K_n$ and an integer $j$ such that  $2\leq j\leq n$, we define 
\[
f_H(j,u,v) = \sum_{u P_{j-2} v \subseteq H} p_w(u P_{j-2} v),
\]
and
\[
f_H(j,u) = \sum_{v \in V(H)\setminus\{u\}} f(j,u,v).
\]
In the case when $H$ is the complete graph $K_n$ we simply write $f(j,u,v)$ and $f(j,u)$.
The following lemma will be essential in the proof of Theorem~\ref{thm1}.
\begin{lemma}\label{lemma_main}
Let $k\ge 2$, and let $e_1=u_1v_1$ and $e_2=u_2v_2$ be distinct edges of $K_n$ such that $w(e_1)>0$ and $w(e_2)>0$. 
Then we have $f(k,u_1,v_1)= f(k,u_2,v_2).$
\end{lemma}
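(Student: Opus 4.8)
The plan is to extract a first-order optimality (Lagrange-multiplier) condition from the assumption that $w$ maximizes $\beta(w,C_k)=\sum_{C_k\subseteq K_n}p_w(C_k)$ over the set of weight functions satisfying Conditions~\ref{1} and~\ref{2}. The preliminary step is a bookkeeping identity: for every edge $uv\in E(K_n)$, the partial derivative $\partial_{w(uv)}\beta(w,C_k)$ equals $f(k,u,v)$. This follows because each $p_w(C_k)$ is a product of distinct edge weights, so $\partial_{w(e)}p_w(C_k)=p_w(C_k-e)$ if $e\in E(C_k)$ and $0$ otherwise; deleting $uv$ is a bijection from the $k$-cycles of $K_n$ through $uv$ onto the $k$-vertex paths $uP_{k-2}v$ in $K_n$, and $p_w(C_k)=w(uv)\,p_w(C_k-uv)$, whence $\partial_{w(uv)}\beta(w,C_k)=\sum_{uP_{k-2}v\subseteq K_n}p_w(uP_{k-2}v)=f(k,u,v)$.

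Next I would use the hypothesis that $e_1$ and $e_2$ carry positive weight to build a feasible one-parameter perturbation that shifts weight between them. For real $t$, set $w_t:=w+t(\chi_{e_1}-\chi_{e_2})$, where $\chi_e$ is the function on $E(K_n)$ equal to $1$ on $e$ and $0$ elsewhere. Then $\sum_e w_t(e)=\sum_e w(e)=1$, and $w_t(e)\ge 0$ for every $e$ exactly when $-w(e_1)\le t\le w(e_2)$; since $w(e_1)>0$ and $w(e_2)>0$, this interval contains $0$ in its interior, so $w_t$ satisfies Conditions~\ref{1} and~\ref{2} for all small $t$. By maximality of $w$, the polynomial $t\mapsto\beta(w_t,C_k)$ has a local maximum at $t=0$, so its derivative there vanishes; by the chain rule together with the identity above, that derivative is $\partial_{w(e_1)}\beta(w,C_k)-\partial_{w(e_2)}\beta(w,C_k)=f(k,u_1,v_1)-f(k,u_2,v_2)$, which is therefore $0$.

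That is the whole argument, and I do not expect a genuine obstacle. The two places needing care are the combinatorial identity for the derivative — verifying that the map $C\mapsto C-uv$ really is a bijection between $k$-cycles through $uv$ and $k$-vertex $(u,v)$-paths and that $p_w$ factors as stated — and the feasibility of the two-edge perturbation, which is precisely where $w(e_1)>0$ and $w(e_2)>0$ are needed: a maximizer may lie on the boundary of the simplex, and for a zero-weight edge one could perturb in only one direction, giving an inequality rather than the claimed equality. It is also worth noting at the start that a maximizer $w$ exists for each fixed $n$, since $\beta(\cdot,C_k)$ is continuous on the compact standard simplex; this is implicit in the statement.
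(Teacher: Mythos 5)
Your proposal is correct and is essentially the paper's own argument: the authors also shift weight between $e_1$ and $e_2$ (writing $x=w(e_1)$, $c-x=w(e_2)$, so that $\beta$ becomes a quadratic $g(x)=Ax(c-x)+B_1x+B_2(c-x)+C$), use positivity of both weights to place the maximum in the interior so that $g'(w(e_1))=0$, and identify $g'$ with $f(k,u_1,v_1)-f(k,u_2,v_2)$ via exactly the cycle-minus-edge-equals-path correspondence you describe. Your phrasing in terms of the partial-derivative identity $\partial_{w(uv)}\beta=f(k,u,v)$ and the perturbation $w_t=w+t(\chi_{e_1}-\chi_{e_2})$ is just a cleaner packaging of the same first-order optimality computation.
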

\begin{proof}[Proof of Lemma~\ref{lemma_main}]

We set $c:=w(e_1)+w(e_2)$ and define a function $g(x)$ in the following way:
\[
g(x):=\sum_{C_k \subseteq K_n} p_w(C_k) = A x(c-x) + B_1 x + B_2 (c-x) + C,
\]
where
\begin{align*}
    A &= \sum_{ \substack{C_k \subseteq K_n \\ e_1,e_2 \in C_k,}} \frac{p_w(C_k)}{w(e_1)w(e_2)},            \qquad C = \sum_{\substack{C_k \subseteq K_n \\e_1,e_2 \notin C_k}} {p_w(C_k)}, \\
  B_1 &= \sum_{\substack{C_k \subseteq K_n \\ e_1 \in C_k,e_2 \notin C_k}} \frac{p_w(C_k)}{w(e_1)},   \qquad B_2 = \sum_{\substack{C_k \subseteq K_n \\e_1 \notin C_k,e_2 \in C_k}} \frac{p_w(C_k)}{w(e_2)}.
\end{align*}

Note that  $\sum_{C_k \subseteq K_n} p_w(C_k)=g(w(e_1))$. Since $w$ maximizes the  function $\sum_{C_k \subseteq K_n} p_w(C_k)$, we have that the maximum of $g(x)$ is attained at $x=w(e_1)$  for $0\leq x\leq c$. Since  neither $w(e_1)\neq 0$ nor $w(e_1)\neq c$ we have $G_{t+1}(w(e_1)) = 0$. 
Hence we have $-2Ax + Ac + B_1 - B_2 = 0$ for $x = w(e_1)$. It follows that
\[
f(j,u_1,v_1) = B_1 + Aw(e_2) = B_2 + Aw(e_1) = f(j,u_2,v_2).\qedhere
\]
\end{proof}

From Lemma~\ref{lemma_main}, for an edge $uv$ with non-zero weight $w(uv) > 0$ we may assume  $f(j,u,v) = \mu$ for some fixed constant $\mu$. Hence we have 
\begin{equation}\label{Equation_miu}
  \sum_{C_k \subseteq K_n} p_w(C_k) = \frac{1}{k} \sum_{uv \in E(K_n)} w(uv)f(j,u,v) = \frac{\mu}{k} \sum_{uv \in E(K_n)} w(uv) = \frac{\mu}{k}.  
\end{equation}

Furthermore $w(e) \leq 1/k$ for every edge $e \in E(K_n)$. Indeed, 
\[
w(e)\mu = \sum_{e \in C_k} p_w(C_k) \leq \sum_{C_k \subseteq K_n} p_w(C_k) = \frac{\mu}{k}.
\]

For a vertex $v\in V(K_n)$ we denote $\sum_{u \in V(G)} w(uv)$ by $d_G(v)$. For a graph~$G$, a vertex set  $S\subseteq V(G)$ we denote the graph $G[V(G)\setminus S]$ by $G\setminus S$. Also  for an edge $e\in E(G)$, the graph with vertex set $V(G)$ and edge set $E(G)\setminus\{e\}$ is denoted by $G\setminus e$.

\begin{lemma}\label{thm2}
For a fixed integer $r$ such that $3\leq r\leq n$ and distinct vertices $v_1$ and $u$ 
there exists a sequence $v_2,v_3,\dots,v_{r-1}$ of distinct vertices such that 
\[
f_{G_1}(r,v_1,u) \leq d_{G_1}(v_1) d_{G_2}(v_2) \cdots d_{G_{t-1}}(v_{t-1}) f_{G_t}(r-t+1,v_t,u),
\]
for every integer $t$ satisfying $1\leq t \leq r-1$, where $G_1 = K_n \setminus v_1u$ and $G_i = K_n\setminus \{v_1,v_2,\dots,v_{i-1}\}$, for every $i = 2,3,\dots,r-1$.
\end{lemma}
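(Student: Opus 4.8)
\textbf{Proof proposal for Lemma~\ref{thm2}.}

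The plan is to build the sequence $v_2,\dots,v_{r-1}$ greedily, one vertex at a time, maintaining the claimed inequality as an invariant. The base case $t=1$ is trivial since the right-hand side then reads $f_{G_1}(r,v_1,u)$ itself. For the inductive step, suppose $v_2,\dots,v_{t-1}$ have been chosen so that
\[
f_{G_1}(r,v_1,u) \leq d_{G_1}(v_1)\cdots d_{G_{t-1}}(v_{t-1})\, f_{G_t}(r-t+1,v_t,u).
\]
I want to choose $v_t$ so that
\[
f_{G_t}(r-t+1,v_t,u) \leq d_{G_t}(v_t)\, f_{G_{t+1}}(r-t,v_{t+1},u),
\]
where $G_{t+1}=G_t\setminus v_t$. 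The key identity is the path-splitting recursion: every $v_t P_{r-t-1} u$ path in $G_t$ decomposes uniquely as an edge $v_t v_{t+1}$ together with a $v_{t+1} P_{r-t-2} u$ path in $G_t - v_t = G_{t+1}$ avoiding $v_t$. Hence
\[
f_{G_t}(r-t+1,v_t,u) \;=\; \sum_{v_{t+1}\in V(G_{t+1})\setminus\{u\}} w(v_t v_{t+1})\, f_{G_{t+1}}(r-t, v_{t+1}, u).
\]
This is a weighted average: the coefficients $w(v_t v_{t+1})$ sum to at most $d_{G_t}(v_t)$. Therefore there exists a choice of $v_{t+1}$ with
\[
f_{G_{t+1}}(r-t, v_{t+1}, u) \;\geq\; \frac{1}{d_{G_t}(v_t)}\, f_{G_t}(r-t+1, v_t, u),
\]
which is exactly the desired step. (If $d_{G_t}(v_t)=0$ then $f_{G_t}(r-t+1,v_t,u)=0$ and the inequality is vacuous; one can pick $v_{t+1}$ arbitrarily among the unused vertices, of which there are enough since $t\le r-1\le n-1$.) Substituting back and using the inductive hypothesis completes the step.

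A small bookkeeping point I would be careful about: when I write $f_{G_{t+1}}(\cdot)$ I must make sure $G_{t+1} = K_n \setminus \{v_1,\dots,v_t\}$ agrees with the definition in the lemma statement, and in particular that the edge $v_1 u$ removed to form $G_1$ does not cause trouble — but since $v_1\notin V(G_2)$ already, the distinction between $K_n\setminus v_1 u$ and $K_n$ on the remaining vertices is immaterial for $G_2,\dots,G_{r-1}$, so the recursion is clean from step $t=1$ onward (the very first split peels off $v_1$ from $G_1$, and the edge $v_1 u$ is absent from $G_1$, which is consistent with $u$ being the far endpoint that the path must still reach, so no $v_1 u$ edge is ever used).

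The main obstacle is not any single inequality — each step is just the averaging argument above — but rather making sure the chosen $v_2,\dots,v_{r-1}$ are \emph{distinct} and distinct from $v_1$ and $u$, simultaneously for all $t$, while the paths being summed genuinely live in the shrinking graphs $G_t$. The averaging step naturally produces $v_{t+1}\in V(G_{t+1})\setminus\{u\} = V(K_n)\setminus\{v_1,\dots,v_t,u\}$, so distinctness is automatic as long as every such vertex set is nonempty, i.e. as long as $t+1 \le n-1$; since $t\le r-1$ and $r\le n$, the worst case $t=r-1$ gives $t+1 = r \le n$, and we only ever need $v_{t+1}$ for $t\le r-2$, so $t+1\le r-1\le n-1$ and the set is nonempty. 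Thus the induction runs all the way to $t=r-1$ and yields the full sequence.
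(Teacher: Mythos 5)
Your proposal is correct and follows essentially the same route as the paper: induction on $t$, the path-splitting identity $f_{G_t}(r-t+1,v_t,u)=\sum_{x} w(v_t x)\, f_{G_{t+1}}(r-t,x,u)$, and then selecting $v_{t+1}$ by an averaging argument, which is exactly the paper's choice of the maximizer of $f_{G_{t+1}}(r-t,\cdot,u)$. Your extra bookkeeping about distinctness and the removed edge $v_1u$ is consistent with, and slightly more careful than, what the paper writes.
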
 

\begin{proof}
The proof proceeds by induction on $t$. The base case $t=1$ is trivial. We will prove the statement of the lemma for $t=j$ where $1<j\le r-1$ assuming that the statement holds for $t=j-1$.

We have
\[
f_{G_1}(r,v_1,u) \leq d_{G_1}(v_1) d_{G_2}(v_2) \cdots d_{G_{j-2}}(v_{j-2}) f_{G_{j-1}}(r-j+2,v_{j-1},u).
\]
Fix a vertex $v_j$ such that  $f_{G_j}(r-j+1,v_j,u) = \max_{x \in V(G_j)} f_{G_j}(r-j+1,x,u)$. 
Then,
\begin{align*}
&f_{G_{j-1}}(r-j+2,v_{j-1},u) = \sum_{x \in V(G_j)} w(v_{j-1}x) f_{G_j}(r-j+1,x,u) \\ 
&\leq \sum_{x \in V(G_j)} w(v_{j-1}x) f_{G_j}(r-j+1,v_j,u)= d_{G_{j-1}}(v_{j-1})f_{G_j}(r-j+1,v_j,u).
\end{align*}
Thus, we have
\[
f_{G_1}(r,v_1,u) \leq d_{G_1}(v_1) d_{G_2}(v_2) \cdots d_{G_{j-2}}(v_{j-2}) d_{G_{j-1}}(v_{j-1}) f_{G_{j}}(r-j+1,v_{j},u).\qedhere
\]
 \end{proof}

\begin{lemma}\label{thm3}
For every vertex $v$ and integer $r$ with  $2\leq r\leq n$, we have
\[
f(r,v) \leq \left( \frac{\sum_{e \in E(K_n)} w(e) }{r-1}\right)^{r-1}.
\]
\end{lemma}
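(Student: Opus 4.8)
The plan is to prove Lemma~\ref{thm3} by induction on $r$, using Lemma~\ref{thm2} to reduce a count of paths to a product of weighted degrees, and then bounding that product by the AM--GM inequality. First I would handle the base case $r=2$: here $f(2,v) = \sum_{u} w(vu) = d_{K_n}(v) \le \sum_{e\in E(K_n)} w(e)$, which matches the claimed bound since the exponent $r-1$ equals $1$.

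For the inductive step, fix $v$ and $r\ge 3$. The idea is to apply Lemma~\ref{thm2} with $v_1 = v$ and with $u$ ranging over all other vertices, but more cleanly: sum $f(r,v,u)$ over $u$ to get $f(r,v) = \sum_u f(r,v,u) = \sum_u f_{G_1}(r,v,u) + (\text{terms using the edge } vu)$. Actually the cleaner route is to split a path counted by $f(r,v)$ at its \emph{first} edge: $f(r,v) = \sum_{x} w(vx) f'(r-1,x,v)$, where $f'$ counts paths avoiding $v$. Then I would bound $f'(r-1,x,v) \le f_{K_n\setminus v}(r-1,x) \le \bigl(\frac{\sum_{e\in E(K_n\setminus v)} w(e)}{r-2}\bigr)^{r-2}$ by the induction hypothesis applied inside the graph $K_n\setminus v$ (the statement of Lemma~\ref{thm3} is really a statement about an arbitrary weighted complete graph, so it applies to $K_n\setminus v$ with its inherited weights). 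Writing $W = \sum_{e\in E(K_n)} w(e)$ and $d = d_{K_n}(v)$, this gives
\[
f(r,v) \le d \left(\frac{W-d}{r-2}\right)^{r-2}.
\]
Finally I would maximize the right-hand side over $d \in [0,W]$: the function $d \mapsto d\,(W-d)^{r-2}$ is maximized at $d = W/(r-1)$, giving $\frac{W}{r-1}\bigl(\frac{(r-2)W}{r-1}\bigr)^{r-2}$, and dividing by $(r-2)^{r-2}$ yields exactly $\bigl(\frac{W}{r-1}\bigr)^{r-1}$, as desired. This last optimization is a one-variable calculus (or weighted AM--GM) computation.

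The main subtlety — and the step I would be most careful about — is making the induction statement strong enough to be self-applicable on $K_n\setminus v$, i.e.\ phrasing Lemma~\ref{thm3} (or an internal claim) for an arbitrary nonnegative weighting of a complete graph rather than the specific normalized $w$, since after deleting $v$ the remaining weights no longer sum to $1$ and the relevant quantity becomes $W - d$ rather than $1$. A secondary point is justifying the inequality $f'(r-1,x,v) \le f_{K_n\setminus v}(r-1,x)$: every path from $x$ avoiding $v$ with $v$ as a "forbidden endpoint" is in particular a path in $K_n\setminus v$ starting at $x$, so this is just dropping a constraint, hence monotone. One could alternatively bypass the first-edge splitting and invoke Lemma~\ref{thm2} directly with $t = r-1$, which telescopes $f_{G_1}(r,v_1,u)$ into $d_{G_1}(v_1)\cdots d_{G_{r-2}}(v_{r-2}) f_{G_{r-1}}(2,v_{r-1},u) = d_{G_1}(v_1)\cdots d_{G_{r-1}}(v_{r-1})$; summing over $u$ and bounding each degree $d_{G_i}(v_i) \le W$ crudely is too lossy, so instead one keeps the degrees symbolic, notes $\sum_i d_{G_i}(v_i)$ is controlled (each $d_{G_i}(v_i) \le W - \sum_{j<i} (\text{something})$), and applies AM--GM to the $(r-1)$-fold product. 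Either way the crux is the same AM--GM / single-variable optimization at the end; the bookkeeping of which complete subgraph each weighted degree lives in is the part most prone to off-by-one errors.
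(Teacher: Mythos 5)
Your proposal is correct and follows essentially the same route as the paper: induction on $r$, splitting each path at its first edge to reduce to $f_{K_n\setminus v}(r-1,x)$, and then bounding $d\bigl(\frac{W-d}{r-2}\bigr)^{r-2}$ by $\bigl(\frac{W}{r-1}\bigr)^{r-1}$ (the paper phrases this last step as an AM--GM application rather than a one-variable optimization, but it is the same computation). The subtlety you flag --- that the induction hypothesis must be applied to $K_n\setminus v$ with non-normalized inherited weights, which is why the bound is stated in terms of $\sum_{e}w(e)$ rather than $1$ --- is exactly what the paper's proof relies on implicitly.
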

\begin{proof}
We prove the lemma by induction on $r$. The base case $r=2$ is trivial since
$f(2,v) \le \sum_{e \in E(K_n)} w(e)$. 
We assume that the statement of the lemma holds for every $r$ satisfying  $2\leq r < j$ and prove it for $r=j$, where $2<j\le n$

We obtain
\begin{align*}
  f(j,v) &= \sum_{x \in V(K_n)} w(vx) f_{K_n \backslash\{v\} }({j-1},x)  
   \leq \sum_{x \in V(K_n)} w(vx) \left( \frac{\sum_{e \in E(K_n\backslash\{v\})} w(e) }{j-2}\right)^{j-2}\\
&\leq \left( \frac{\sum_{x \in V(K_n)} w(vx)  + (j-2)\left( \frac{\sum_{e \in E(K_n\backslash\{v\})} w(e) }{j-2}\right) }{j-1}\right)^{j-1}
= \left( \frac{\sum_{e \in E(K_n)} w(e) }{j-1}\right)^{j-1},
\end{align*}
where the first inequality comes from the induction hypothesis, and the second inequality follows from the inequality of the arithmetic and geometric means.  
\end{proof}

In order to finish the proof of Theorem~\ref{thm1} it is sufficient to show that  $\mu \leq \dfrac{1}{k^{k-1}}$ by~\eqref{Equation_miu}.

Choose an edge $v_0v_1$ with the maximum weight $w(v_0v_1)$. Let us denote the graph $K_n\setminus v_0v_1$ by $G_1$. 
By Lemma~\ref{thm2} we have a sequence of vertices $v_2,v_3,\dots,v_{k-1} \in V(K_n)$ satisfying the following inequality for every $t$:
\begin{equation}\label{Equation_f_g_1_k_}
f_{G_1}(k,v_1,v_0) \leq d_{G_1}(v_1) d_{G_2}(v_2) \cdots d_{G_{t-1}}(v_{t-1}) f_{G_t}({k-t+1},v_t,v_0),    
\end{equation}
where $1\leq t \leq k-1$,  $G_i = K_n\setminus\{v_1,v_2,\dots,v_{i-1}\}$, for all $i\in \{2,3,\dots,r-1\}$.
Here we distinguish the following two cases.

\noindent\textbf{Case 1:} Suppose that $d_{G_1}(v_1) + d_{G_2}(v_2) + \cdots + d_{G_{k-2}}(v_{k-2}) \leq \dfrac{k-2}{k}$.
Then by the inequality of the arithmetic and geometric means we have  
\[
\prod_{i=1}^{k-2} d_{G_i}(v_i) \leq 
\left( \frac{\sum_{i=1}^{k-2} d_{G_i}(v_i)}{k-2}\right)^{k-2} \leq \frac{1}{k^{k-2}}.
\]
From~\eqref{Equation_f_g_1_k_} we obtain the desired inequality
\[
\mu = f_{G_1}(k,v_1,v_0) \leq \left(\prod_{i=1}^{k-2} d_{G_i}(v_i)\right) \cdot f_{G_{k-1}}({2},v_{k-1},v_0)\leq \frac{1}{k^{k-2}} \frac{1}{k} \leq \frac{1}{k^{k-1}}.
\]
Even more the inequality holds with equality if and only if  $w(v_0v_1) = w(v_1v_2) = \cdots = w(v_{k-2}v_{k-1}) = w(v_{k-1}v_0) = 1/k$. Therefore equality in Theorem~\ref{thm1} is attained only for weight functions satisfying  $w(e) = \frac{1}{k}$ for $e \in E(C)$ and $w(e) = 0 $ otherwise, where $C$ is a fixed cycle of length $k$ of $K_n$.

\noindent\textbf{Case 2:} Suppose that $d_{G_1}(v_1) + d_{G_2}(v_2) + \cdots + d_{G_{k-2}}(v_{k-2}) > \dfrac{k-2}{k}$.
Let $t$ be the minimum integer in $\{1,2,\dots,k-2\}$ such that $d_{G_1}(v_1) + d_{G_2}(v_2) + \cdots + d_{G_{t}}(v_{t}) > t/k$. From minimality of~$t$ we have $d_{G_1}(v_1) + d_{G_2}(v_2) + \cdots + d_{G_{t-1}}(v_{t-1}) \leq (t-1)/k$. By the inequality of the arithmetic and geometric means we get
\begin{equation*}
\prod_{i=1}^{t-1} d_{G_i}(v_i) \leq \left( \frac{\sum_{i=1}^{t-1} d_{G_i}(v_i)}{t-1} \right)^{t-1} \leq \frac{1}{k^{t-1}}.
\end{equation*}
Observe that since the edge $v_0v_1$ has the maximum weight, by Lemma~\ref{thm3} we have

\begin{align*}
   &f_{G_t}({k-t+1},v_t,v_0) \leq \sum_{u \in V(G_{t+1})} w(v_tu)f_{G_{t+1}}({k-t},v_0,u)
   \leq  w(v_0v_1)f_{G_{t+1}}({k-t},v_0)\\&  \leq  w(v_0v_1) \left( \frac{\sum_{e \in E(G_{t+1})} w(e) }{k-t-1}\right)^{k-t-1}
    \leq
\left( \frac{w(v_0v_1) + \sum_{e \in E(G_{t+1})} w(e) }{k-t}\right)^{k-t},
\end{align*}
 where the last inequality follows from the inequality of the arithmetic and geometric means. By our choice of $t$, it follows that  
 \[w(v_0v_1) + \sum_{e \in E(G_{t+1})} w(e) \leq 1 - \sum_{i=1}^{t} d_{G_i}(v_i) < \dfrac{k-t}{k},\]
 and we obtain that
 \[
f_{G_t}({k-t+1},v_t,v_0) \leq
\left( \frac{w(v_0v_1) + \sum_{e \in E(G_{t+1})} w(e) }{k-t}\right)^{k-t} 
< \frac{1}{k^{k-t}}.
\]
Finally we have the desired bound on $\mu$:
\[
\mu = f(k,v_1,v_0) \leq \left(\prod_{i=1}^{t-1} d_{G_i}(v_i)\right) \cdot 
f_{G_t}({k-t+1},v_t,v_0) < \frac{1}{k^{t-1}}\frac{1}{k^{k-t}}= \frac{1}{k^{k-1}}. \qedhere
\]
\end{proof}

\section{Acknowledgements}
We would like to thank Ben Lund for some useful preliminary discussions on the topic. The research of Gy\H{o}ri and Salia was supported by the National Research, Development and Innovation Office NKFIH, grants  K132696 and SNN-135643. The research of Tompkins was supported by NKFIH grant K135800.

  \textit{E-mail addresses:} \\
  J.~Lv: \texttt{lvzq19@mails.tsinghua.edu.cn}\\
  E.~Gy\H{o}ri: \texttt{gyori.ervin@renyi.hu}\\
  Z.~He: \texttt{hz18@mails.tsinghua.edu.cn}\\
  N.~Salia: \texttt{nikasalia@yahoo.com}\\
  C.~Tompkins: \texttt{ctompkins496@gmail.com}\\
  X.~Zhu: \texttt{ zhuxt@smail.nju.edu.cn}.

\end{document}